\newcommand{\achr}{\mathrm{achr}}
\newcommand{\cM}{\mathcal{M}}
\newcommand\cL{\mathcal{L}}
\newtheorem{theorem}{Theorem}
\newtheorem{lemma}[theorem]{Lemma}
\newtheorem{proposition}[theorem]{Proposition}
\newtheorem{conjecture}{Conjecture}
\newtheorem{corollary}[theorem]{Corollary}
\title{On the achromatic number of the Cartesian product of two complete graphs}
\author{Mirko Hor\v n\' ak\\
Institute of Mathematics, P.J. \v{S}af\'arik University\\
Jesenn\'a 5, 040\ 01 Ko\v{s}ice, Slovakia\\
E-mail address: mirko.hornak$@$upjs.sk}
\date{}
\begin{document}
\maketitle

\begin{abstract}
A vertex colouring $f:V(G)\to C$ of a graph $G$ is complete if for any $c_1,c_2\in C$ with $c_1\ne c_2$ there are in $G$ adjacent vertices $v_1,v_2$ such that $f(v_1)=c_1$ and $f(v_2)=c_2$. The achromatic number of $G$ is the maximum number $\achr(G)$ of colours in a proper complete vertex colouring of $G$. Let $G_1\square G_2$ denote the Cartesian product of graphs $G_1$ and $G_2$. In the paper $\achr(K_{r^2+r+1}\square K_q)$ is determined for an infinite number of $q$s provided that $r$ is a finite projective plane order.
\end{abstract}

\noindent {\bf Keywords:} graph, complete vertex colouring, achromatic number, Cartesian product, finite projective plane
\vskip3mm
\noindent {\bf Mathematics Subject Classification:} 05C15, 51E20

\section{Introduction}

Consider a finite simple graph $G$ and a finite set of colours $C$. A vertex colouring $f:V(G)\to C$ is \textit{complete} if for any pair $c_1,c_2$ of distinct colours in $C$ one can find a pair $v_1,v_2$ of adjacent vertices in $G$ such that $f(v_i)=c_i$, $i=1,2$. Obviously, if $f$ is proper (adjacent vertices receive distinct colours) and $|C|$ is minimum possible (\textit{i.e.}, $|C|=\chi(G)$, the chromatic number of $G$), then $f$ is necessarily complete.

The \textit{achromatic number} of $G$, in symbols $\achr(G)$, is the \textit{maximum} number of colours in a proper complete  vertex colouring of $G$. This invariant was introduced by Harary, Hedetniemi, and Prins in~\cite{HaHePr}. The problem of determining the achromatic number is NP-complete even for trees, see Cairnie and Edwards~\cite{CaE}. So, it is not surprising that exact results concerning the achromatic number are quite rare. A comprehensive bibliography for the achromatic number is maintained by Edwards~\cite{E}.

Some attention was paid to the achromatic number of graphs created by graph operations. Hell and Miller in \cite{HeM} analysed $\achr(G_1\times G_2)$ where $G_1\times G_2$ is the categorical product of graphs $G_1$ and $G_2$ (in the paper we use the notation taken from the monograph Imrich and Klav\v zar~\cite{IK}). 

The Cartesian product $G_1\square G_2$ of graphs $G_1$ and $G_2$ is the graph with $V(G_1\square G_2)=V(G_1)\times V(G_2)$, in which $(x_1,y_1)$ is joined by an edge to $(x_2,y_2)$ if and only if either $x_1=x_2$ and $\{y_1,y_2\}\in E(G_2)$ or $\{x_1,x_2\}\in E(G_1)$ and $y_1=y_2$. 

A motivation for the study of $\achr(K_p\square K_q)$ comes from the observation by Chiang and Fu~\cite{ChiF1} stating that the assumption $\achr(G_1)=p$ and $\achr(G_2)=q$ implies $\achr(G_1\square G_2)\ge\achr(K_p\square K_q)$. 

Evidently, since the graphs $K_p\square K_q$ and $K_q\square K_p$ are isomorphic to each other, when looking for $\achr(K_p\square K_q)$ we may suppose without loss of generality that $p\le q$.

Let $p,q$ be integers. In the paper we work with \textit{integer intervals} that are denoted as follows:
\begin{equation*}
[p,q]=\{z\in\mathbb Z:p\le z\le q\},\qquad[p,\infty)=\{z\in\mathbb Z:p\le z\}.
\end{equation*}
For a finite set $A$ and $k\in[0,|A|]$ the set $\binom Ak$ consists of all $k$-element subsets of $A$. 

The element in the $i$th row and the $j$th column of a matrix $M$ is presented as $(M)_{i,j}$. The submatrix of $M$ corresponding to the $i$th row of $M$ is denoted by $R_i(M)$.

The value of $\achr(K_p\square K_q)$ is known for each pair $(p,q)$ satisfying $1\le p\le6$ and $p\le q$. Besides the trivial case $p=1$ ($K_1\square K_q$ is isomorphic to $K_q$, hence $\achr(K_1\square K_q)=q$), the case $p\in\{2,3,4\}$ was settled by Hor\v n\'ak and Puntig\'an~\cite{HoPu} (for $p\in\{2,3\}$ the result was rediscovered in~\cite{ChiF1}), the case $p=5$ by Hor\v n\'ak and P\v cola~\cite{HoPc1}, \cite{HoPc2}, and the case $p=6$ by Hor\v n\'ak~\cite{Ho3}, \cite{Ho1}, \cite{Ho2}. The achromatic number of $K_p\square K_q$, where $r$ is an odd order of a finite projective plane, was determined in Chiang and Fu~\cite{ChiF2} (for $r=3$ see already Bouchet~\cite{B}). Some values of $\achr(K_p\square K_q)$ with $p\le6$ will be used in Section~\ref{asan}. They are summarised here:

\begin{theorem}\label{known}
$1.$ If $q\in[3,\infty)$, then $\achr(K_2\square K_q)=q+1$.

$2.$ If $q\in[4,\infty)$, then $\achr(K_3\square K_q)=\left\lfloor\frac{3q}2\right\rfloor$.	

$3.$ If $q\in[25,\infty)$, then $\achr(K_4\square K_q)=\left\lfloor\frac{5q}3\right\rfloor$.

$4.$ If $q\in[43,\infty)$, then $\achr(K_5\square K_q)=\left\lfloor\frac{9q}5\right\rfloor$.		

$5.$ If $q\in[41,\infty)$ and $q\equiv1\pmod2$, then $\achr(K_6\square K_q)=2q+3$.	

$6.$ If $q\in[42,\infty)$ and $q\equiv0\pmod2$, then $\achr(K_6\square K_q)=2q+4$.	
\end{theorem}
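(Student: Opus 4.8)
The statement collects six equalities of the form $\achr(K_p\square K_q)=N_p(q)$, and for each I would argue the two inequalities separately, using a single underlying reformulation. A proper colouring of $K_p\square K_q$ is exactly an assignment of colours to the cells of a $p\times q$ array in which no colour repeats within a row or within a column (each row is a clique $K_q$, each column a clique $K_p$); the colouring is complete precisely when every pair of distinct colours occurs together in some row or some column. Thus each part asks for the maximum number of colours in such an array colouring, and I would attack it by (i) exhibiting an explicit colouring with $N_p(q)$ colours and (ii) proving that no colouring can do better.

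For the lower bounds (the constructions) I would build the colourings from a small number of colour-class shapes, most classes having size $2$ or $3$ so that the total cell budget $pq$ supports the target count $N_p(q)$; for instance $\left\lfloor\frac{3q}2\right\rfloor$ colours on a $3\times q$ array forces almost all classes to have size $2$. The classes would be placed by a periodic or residue-based rule so that the collinearity (completeness) condition is met, and I would verify completeness by checking that the chosen pattern of size-$2$ (and a few exceptional) classes realises every colour pair in a line. The congruence hypotheses on $q$ (the parity split for $p=6$) and the lower thresholds ($q\ge 25,\,43,\,41,\,42$) arise exactly from the divisibility and fitting requirements of these patterns; the finitely many small $q$ below the threshold either fall outside the stated range or would be treated by hand.

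For the upper bounds I would begin from the basic double-counting inequality
\begin{equation*}
\binom N2\le p\binom q2+q\binom p2,
\end{equation*}
obtained because the $\binom N2$ colour pairs must all be covered by the $\binom q2$ pairs in each of the $p$ rows and the $\binom p2$ pairs in each of the $q$ columns. This inequality alone has the wrong leading constant, so the real work is to sharpen it: writing $m_c$ for the size of colour class $c$ (so $\sum_c m_c=pq$), one has the visibility bound $N-1\le m_c(p+q-m_c-1)$ for every $c$, which restricts how many classes can be large or small, and one must account for pairs that are wasted because they are realised in both a row and a column. Combining the size distribution of the classes with these waste estimates yields the sharp bound $N\le N_p(q)$; for $p=6$ an additional counting argument modulo $2$ produces the extra unit available only for even $q$, explaining the split between $2q+3$ and $2q+4$.

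The main obstacle is precisely this upper-bound refinement. The crude pair count is easy but far from tight, and closing the gap to the exact value requires a delicate extremal analysis of the feasible colour-class size distributions, carried out separately for each $p$ and growing rapidly in complexity with $p$ (the case $p=6$ needed three separate papers). Matching the bound by a construction at the threshold value of $q$, and pinning down the parity obstruction for $p=6$, are the points where the argument is hardest and least mechanical.
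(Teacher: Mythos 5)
Your framework is the right one, and in fact it is exactly the machinery this paper sets up for its own main theorem: your array reformulation is Proposition~\ref{mat}, and your ``visibility bound'' $N-1\le m_c(p+q-m_c-1)$ is Lemma~\ref{opt}.3 applied to a class of minimum frequency. But note first that the paper contains no proof of Theorem~\ref{known} at all: it is a survey statement, quoted with citations to Hor\v n\'ak--Puntig\'an ($p\le4$), Hor\v n\'ak--P\v cola ($p=5$), and three separate papers of Hor\v n\'ak ($p=6$). So the only fair comparison is between your sketch and the content of those cited papers, and there the proposal has a genuine, self-acknowledged gap: you never exhibit any of the six constructions, and you never carry out the sharpened upper-bound analysis, only describe what such an argument ``would have to do.'' The crude pair count $\binom N2\le p\binom q2+q\binom p2$ gives $N\lesssim q\sqrt p$, and combining Lemma~\ref{opt}.2 with Lemma~\ref{opt}.3 gives the same leading constant $\sqrt p$ (if $N\approx cq$ then $l\le p/c$, whence $c\le\sqrt p$); closing the gap from $\sqrt p\,q$ down to $\frac32 q$, $\frac53 q$, $\frac95 q$, $2q+O(1)$ is the entire difficulty, and each cited paper spends dozens of pages on a case analysis of colour-class size distributions to do it. Deferring that step means the proposal proves none of the six equalities.

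One concrete point in your sketch is also wrong rather than merely incomplete: you attribute the thresholds ($q\ge25,43,41,42$) purely to ``divisibility and fitting requirements'' of the constructions, with small $q$ ``treated by hand'' to confirm the same formula. In fact below the thresholds the stated formulas can \emph{fail}, i.e., the achromatic number is genuinely different there: for example $\achr(K_6\square K_7)=18$ (the subject of~\cite{Ho3}), whereas $2q+3$ would give $17$. So the thresholds mark where the extremal upper-bound analysis becomes valid, not just where the periodic construction fits, and a correct proof must establish the upper bound specifically in the stated ranges rather than verify small cases against the same formula.
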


What follows is (up to the notation) a natural and somehow standard (\textit{cf.}~\cite{HoPu}) approach to dealing with a proper complete vertex colouring of the Cartesian product of two complete graphs.

Suppose that $p,q\in[1,\infty)$, $V(K_s)=[1,s]$ for $s\in\{p\}\cup\{q\}$, $C$ is a finite set and $f:V(K_p\square K_q)\to C$ is a proper complete vertex colouring. Let $M(f)$ be the $p\times q$ matrix with $(M(f))_{i,j}=f(i,j)$. The fact that $f$ is proper means that each row of $M(f)$ consists of $q$ distinct colours of $C$, and similarly each column of $M(f)$ consists of $p$ distinct colours of $C$. Because of the completeness of $f$ for any $\{c_1,c_2\}\in\binom C2$ there is a line (a row or a column) of $M(f)$ that contains both $c_1$ and $c_2$. Let $\cM(p,q,C)$ be the set of all $p\times q$ matrices $M$ with elements from $C$ such that $M$ has all above properties of the matrix $M(f)$.

Conversely, let $M\in\cM(p,q,C)$. It is obvious to see that the colouring $f_M:V(K_p\square K_q)\to C$ defined by $f_M(i,j)=(M)_{i,j}$ is a proper complete vertex colouring of the graph $K_p\square K_q$. Thus, we have just proved 

\begin{proposition}\label{mat}
If $p,q\in[1,\infty)$ and $C$ is a finite set, then the following statements are equivalent:

$1.$ There is a proper complete vertex colouring of the graph $K_p\square K_q$ using as colours elements of $C$.

$2.$ $\cM(p,q,C)\ne\emptyset$.
\end{proposition}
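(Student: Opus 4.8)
The plan is to verify the two implications separately, in each case translating the graph-theoretic conditions of properness and completeness into the matrix conditions defining $\cM(p,q,C)$ via the correspondence $(M)_{i,j}\leftrightarrow f(i,j)$. The single observation driving both directions is that, because $K_p$ and $K_q$ are complete, two vertices $(i_1,j_1)$ and $(i_2,j_2)$ of $K_p\square K_q$ are adjacent precisely when they agree in exactly one coordinate --- equivalently, when the corresponding matrix entries lie in a common line (a common row or a common column) of the $p\times q$ array. I would record this equivalence first, since every subsequent step reduces to it.

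For the direction $1\Rightarrow2$, I would start from a proper complete colouring $f$ and form the matrix $M(f)$ with $(M(f))_{i,j}=f(i,j)$. Properness of $f$ says that adjacent vertices receive distinct colours; applied to the two families of adjacencies (within a fixed row, i.e.\ common first coordinate, and within a fixed column) this is exactly the statement that each row of $M(f)$ has $q$ pairwise distinct entries and each column has $p$ pairwise distinct entries. Completeness of $f$ says that for every pair $\{c_1,c_2\}\in\binom C2$ some edge joins a $c_1$-vertex to a $c_2$-vertex; by the adjacency observation that edge lies in a single line, so that line of $M(f)$ contains both $c_1$ and $c_2$. These are precisely the defining properties of $\cM(p,q,C)$, so $M(f)\in\cM(p,q,C)$ and statement~$2$ holds.

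For the converse $2\Rightarrow1$, I would take any $M\in\cM(p,q,C)$ and define $f_M\colon V(K_p\square K_q)\to C$ by $f_M(i,j)=(M)_{i,j}$, then run the argument in reverse. Distinctness of the entries along every row and every column yields properness through the same adjacency dictionary, while the property that each pair of colours occupies a common line supplies, for every $\{c_1,c_2\}\in\binom C2$, two adjacent vertices coloured $c_1$ and $c_2$, giving completeness. I do not expect a genuine obstacle here --- the content is essentially bookkeeping --- yet the step demanding the most care is the adjacency translation itself: one must confirm that completeness of both factors collapses the product's edge condition to coincidence in exactly one coordinate, so that graph edges and matrix lines correspond exactly, with no spurious or missing adjacencies to distort the dictionary.
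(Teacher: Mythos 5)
Your proposal is correct and takes essentially the same route as the paper: the paper defines $\cM(p,q,C)$ precisely by the properties of $M(f)$ for a proper complete colouring $f$, and obtains the converse by observing that $f_M$ with $f_M(i,j)=(M)_{i,j}$ is proper and complete, which is exactly your two-directional dictionary. The only difference is that you make explicit the adjacency observation (vertices of $K_p\square K_q$ are adjacent iff they agree in exactly one coordinate, i.e.\ iff the corresponding entries share a line), which the paper treats as immediate.
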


We shall need a subset $\cM^*(p,q,C)$ of $\cM(p,q,C)$ consisting of matrices $M$, which satisfy the additional condition that for any $\{c_1,c_2\}\in\binom C2$ there is a row (not merely a line) of $M$ with both $c_1,c_2$.

Let $r\in[2,\infty)$. A \textit{finite projective plane of order $r$} is a pair $(P,\cL)$, where $P$ is a finite set of elements called \textit{points}, and $\cL$ is a set of subsets of $P$ called \textit{lines}, such that the following axioms are fulfilled:
\vskip1mm

$A_1.$ If $p_1,p_2\in P$, $p_1\ne p_2$, there is exactly one line $L(p_1,p_2)\in\cL$ such that $\{p_1,p_2\}\subseteq L(p_1,p_2)$.

$A_2.$ If $L_1,L_2\in\cL$, $L_1\ne L_2$, then $P_1\cap P_2\ne\emptyset$.

$A_3.$ There are four distinct points $\tilde p_1,\tilde p_2,\tilde p_3,\tilde p_4\in P$ determining six distinct lines, \textit{i.e.,} $|\bigcup_{\{i,j\}\in\binom{[1,4]}{2}}\{L(\tilde p_i,\tilde p_j)\}|=6$.

$A_4.$ There is $\tilde L\in\cL$ such that $|\tilde L|=r+1$.
\vskip1mm

\noindent It is well known that points and lines of a finite projective plane $(P,\cL)$ of order $r$ have the following basic properties:
\vskip1mm

$B_1.$  If $L_1,L_2\in\cL$, $L_1\ne L_2$, then $|L_1\cap L_2|=1$.

$B_2.$ If $L\in\cL$, then $|L|=r+1$.

$B_3.$ If $p\in P$, then $|\{L\in\cL:p\in L\}|=r+1$.

$B_4.$ $|P|=r^2+r+1$.

$B_5.$ $|\cL|=r^2+r+1$.
\vskip1mm

Given $r\in[2,\infty)$, to determine whether there exists a finite projective plane of order $r$ (\textit{i.e.}, whether $r$ is a \textit{finite projective plane order}), is in general a notoriously hard problem of finite combinatorics. All positive results available so far are restricted to $r=q^e$, where $q$ is a prime number and $e\in[1,\infty)$.

\section{Some auxiliary results}

The following lemma is well known, \textit{cf.}~\cite{HoPu}. We include its proof here for a better readability of the paper.

\begin{lemma}\label{opt}
If $p\in[1,\infty)$, $q\in[p,\infty)$, $C$ is a set of size $a=\achr(K_p\square K_q)$, $M\in\cM(p,q,C)$ and $l$ is the smallest of frequencies of elements in $M$, then the following hold:

$1.$ $l\le p$;

$2.$ $l\le\left\lfloor\frac{pq}{a}\right\rfloor$;

$3.$ $a\le l(p+q-l-1)+1$.
\end{lemma}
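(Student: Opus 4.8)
The plan is to exploit the defining properties of a matrix $M\in\cM(p,q,C)$: each row consists of $q$ distinct colours and each column of $p$ distinct colours. Consequently a fixed colour $c\in C$ occurs at most once in every row and at most once in every column of $M$, so the set of positions carrying $c$ forms a partial permutation submatrix, in particular with at most one entry in each of the $p$ rows. Hence the frequency of \emph{every} colour is at most $p$, and in particular the smallest frequency satisfies $l\le p$. This disposes of statement $1$ immediately, without reference to the other two.

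For statement $2$ I would first record that every colour of $C$ actually occurs in $M$: completeness requires each pair $\{c_1,c_2\}\in\binom C2$ to share a line, so no colour can be missing. Summing the $a$ frequencies therefore gives the total number of entries $pq$, and since each of these $a$ frequencies is at least $l$ we get $la\le pq$, that is $l\le pq/a$. As $l$ is an integer this sharpens to $l\le\left\lfloor\frac{pq}{a}\right\rfloor$.

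Statement $3$ is the substantial one, and I expect it to be the main obstacle. Fix a colour $c^*$ of minimum frequency $l$ and let its $l$ occurrences sit at positions $(r_1,s_1),\dots,(r_l,s_l)$; by the partial permutation property of Step~$1$ the rows $r_1,\dots,r_l$ are pairwise distinct and so are the columns $s_1,\dots,s_l$. The only lines of $M$ meeting $c^*$ are precisely these $l$ rows and $l$ columns, so completeness forces every colour of $C\setminus\{c^*\}$ to appear among the cells they cover. I would count those cells by inclusion--exclusion: the $l$ rows contribute $lq$ cells, the $l$ columns contribute $lp$ cells, and their overlap consists exactly of the $l^2$ intersections of a chosen row with a chosen column, giving $l(p+q-l)$ cells in total. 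Exactly $l$ of these cells carry $c^*$, since all $l$ occurrences of $c^*$ are such intersections, so at most $l(p+q-l)-l=l(p+q-l-1)$ cells remain for the other colours. Hence $a-1\le l(p+q-l-1)$, i.e. $a\le l(p+q-l-1)+1$.

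The points I would verify most carefully are the two facts on which the subtraction in the final count rests: that all $l$ occurrences of $c^*$ genuinely lie among the $l^2$ intersection cells, and that no colour distinct from $c^*$ is forced to meet $c^*$ outside the listed $2l$ lines. Both follow from the properness and completeness of $M$, but they are exactly where the argument would break down were the positions of $c^*$ not a partial permutation submatrix, so Step~$1$ is doing essential work here rather than merely proving statement $1$.
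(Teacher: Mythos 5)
Your proposal is correct and takes essentially the same route as the paper: for part 3 the paper permutes rows and columns so that the minimum-frequency colour occupies the diagonal positions $(i,i)$, $i\in[1,l]$, and then counts the neighbourhood of its colour class as $ql+(p-l)l-l=l(p+q-l-1)$, which is exactly your inclusion--exclusion count of the cells covered by the $l$ rows and $l$ columns minus the $l$ occurrences of $c^*$. Your explicit remark that completeness forces every colour of $C$ to appear in $M$ (needed for part 2) is a point the paper leaves implicit in treating the colour classes as a partition of $V(K_p\square K_q)$.
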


\begin{proof}
1. The vertex colouring $f_M$ is proper, hence each element of $C$ appears in any row of $M$ at most once.

2. The set of colour classes of $f_M$ is a partition of the set $V(K_p\square K_q)$ of cardinality $pq$ so that $pq\le al$ and $l\le\left\lfloor\frac{pq}{a}\right\rfloor$.

3. Let $\gamma\in C$ be a colour of $f_M$ of frequency $l$. A matrix created from $M$ by permuting the rows and the columns of $M$ evidently belongs to $\cM(p,q,C)$. Therefore, we may suppose without loss of generality that $(M)_{i,i}=\gamma$ for every $i\in[1,l]$. The completeness of $f_M$ means that the neighbourhood $N$ of the colour class $\{(i,i):i\in[1,l]\}$ corresponding to $\gamma$ contains a vertex of each colour in $C\setminus\{\gamma\}$. Since $|N|=ql+(p-l)l-l\le|C|-1$, we have $a=|C|\le l(p+q-l-1)+1$.
\end{proof}

\begin{lemma}\label{+1}
	If $p\in[3,\infty)$, $q\in[2p-1,\infty)$, $a=\achr(K_p\square K_q)$, $C$ is an $a$-element colour set, $\cM^*(p,q,C)\ne\emptyset$, and $d\notin C$ for a colour $d$, then $\cM^*(p,q+1,C\cup\{d\})\ne\emptyset$ and
	$\achr(K_p\square K_{q+1})\ge\achr(K_p\square K_q)+1$.
\end{lemma}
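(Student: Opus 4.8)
The plan is to start from a matrix $M\in\cM^*(p,q,C)$, which the hypothesis supplies, and to manufacture a matrix $M'\in\cM^*(p,q+1,C\cup\{d\})$ by appending a single column and inserting the fresh colour $d$. Once this is done the inequality for $\achr$ follows immediately: since $\cM^*(p,q+1,C\cup\{d\})\subseteq\cM(p,q+1,C\cup\{d\})$ and $|C\cup\{d\}|=a+1$, Proposition~\ref{mat} yields a proper complete colouring of $K_p\square K_{q+1}$ with $a+1$ colours, whence $\achr(K_p\square K_{q+1})\ge a+1$. The whole difficulty therefore lies in placing $d$ so that it shares a \emph{row} with every one of the $a$ old colours while keeping all rows and columns proper and not destroying any old row-incidence. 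The governing observation is that enlarging rows can only help: if $d$ is made to occur in \emph{every} row, then each pair $\{d,c\}$ with $c\in C$ is automatically row-covered, and every old pair remains row-covered as long as no colour is ever deleted from a row.

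The key combinatorial step is to locate $p$ cells of $M$, one in each row, lying in pairwise distinct columns and carrying pairwise distinct colours (a rainbow partial transversal). I would build it greedily, scanning the rows in the order $i=1,\dots,p$. Suppose cells have been chosen for rows $1,\dots,i-1$, using $i-1$ columns and $i-1$ colours. In row $i$ the forbidden columns are those $i-1$ already-used columns together with those columns whose entry is an already-used colour; since $M$ is proper, each such colour sits in at most one cell of row $i$, so the latter family has size at most $i-1$. Hence at most $2(i-1)$ columns are forbidden, and because $q\ge2p-1\ge2i-1>2(i-1)$ an admissible cell $(i,j_i)$ with colour $t_i:=(M)_{i,j_i}$ always survives. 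This is precisely where the hypothesis $q\ge2p-1$ is consumed, and I expect the existence of this transversal to be the only genuine obstacle; everything after it is verification.

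Having fixed such cells, I would define $M'$ by copying $M$ into columns $1,\dots,q$ and then, for every $i\in[1,p]$, setting $(M')_{i,j_i}=d$ and $(M')_{i,q+1}=t_i$. In words, in each row the colour $t_i$ is merely slid into the new column while $d$ occupies its vacated cell, so the colour set of row $i$ becomes its old set enlarged by $\{d\}$. Consequently every row is proper (its entries are the old $q$ distinct colours plus the new colour $d\notin C$), and every pair already covered by a row of $M$ is still covered in $M'$. For the columns I would note that the new column consists of the distinct colours $t_1,\dots,t_p$, while each altered old column $j_i$ has exactly one entry changed, from $t_i$ to the brand-new colour $d$; as the $j_i$ are distinct, no old column receives two copies of $d$, so column properness is preserved.

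Finally, because $d$ now occurs in every row and each colour of $C$ occurs in at least one row of $M$ (hence of $M'$), every pair $\{d,c\}$ with $c\in C$ is row-covered. Together with the preservation of all old row-coverages this gives $M'\in\cM^*(p,q+1,C\cup\{d\})$, completing the construction and, via Proposition~\ref{mat}, the proof.
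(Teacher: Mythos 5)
Your proof is correct and takes essentially the same approach as the paper: your greedy construction of a rainbow partial transversal, with at most $2(i-1)$ forbidden columns in row $i$ ruled out by $q\ge 2p-1$, is exactly the paper's row-by-row interchange of $d$ (appended as a column $J_p(d)$) with an entry in a fresh column carrying a colour not yet moved to the last column, and the subsequent verification of properness and of completeness via $d$ occupying every row is identical. The only cosmetic difference is that you fix the transversal first and perform all $p$ swaps at once, whereas the paper performs the swaps sequentially while maintaining properness at each step.
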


\begin{proof}
For $M\in\cM(p,q,C)$ we consider the block matrix $M^+=(MJ_p(d))$, in which $J_p(d)$ is the $p\times 1$ matrix with all elements equal to $d$.

For each $i\in[1,p]$ we define recurrently an $i\times(q+1)$ matrix $M_i^+$. First, we put $M_1^+=M^+$. For $i\in[2,p]$, if the matrix $M_{i-1}^+$ is already defined, we construct a matrix $M_i^+$ from the block matrix $M_i=\begin{pmatrix}
M_{i-1}^+\\R_i(M^+)\end{pmatrix}$ by interchanging elements $(M_i)_{i,q+1}=d$ and $(M_i)_{i,j}$ for a suitable $j\in[1,q]$ in such a way that lines of $M_i^+$ contain pairwise distinct elements (so that $f_{M_i^+}$ is a proper vertex colouring of $K_i\square K_{q+1}$).

To see that this is doable realise that there are two reasons why an integer from $[1,q]$ cannot be chosen as $j$. The first one is that the $j$th column of $M_{i-1}^+$ contains $d$, and the second one is that $(M_i)_{i,j}$ is an element of the $(q+1)$th column of $M_{i-1}^+$. Therefore, the total number of integers from $[1,q]$, which are not a valid choice for $j$, is $2(i-1)$, and $M_i^+$ can be created in a required way, since $q-2(i-1)\ge q-2(p-1)=q+2-2p\ge1$.

Thus $f_{M_p^+}$ is a proper vertex colouring of $K_p\square K_{q+1}$. As $M\in\cM^*(p,q,C)$, having in mind the construction of $M_p^+$ and the fact that the colour $d$ is present in all $p$ rows of the matrix $M_p^+$, it is clear that the colouring $f_{M_p^+}$ is complete, too. Therefore, $M_p^+\in\cM^*(p,q+1,C\cup\{d\})\ne\emptyset$ and $\achr(K_p\square K_{q+1})\ge|C\cup\{d\}|=a+1=\achr(K_p\square K_q)+1$. 
\end{proof}

\begin{lemma}\label{fppl}
If $r$ is a finite projective plane order and $s\in[r+1,\infty)$, then

$1.$ there exists a colour set $C$ of size $(r^2+r+1)s$ such that $\cM^*(r^2+r+1,$ $(r+1)s,C)\ne\emptyset$; 

$2.$ $\achr(K_{r^2+r+1}\square K_{(r+1)s})\ge(r^2+r+1)s$.
\end{lemma}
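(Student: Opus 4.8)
The plan is to realise the colours and the combinatorial structure of the required matrix directly from the incidence structure of a projective plane of order $r$. Write $n=r^2+r+1$ and fix a finite projective plane $(P,\cL)$ of order $r$, so that by $B_2$--$B_5$ we have $|P|=|\cL|=n$, every line has $r+1$ points, and every point lies on $r+1$ lines. I would index the $n$ rows of the matrix by the points of $P$ and take as colour set $C=\cL\times[1,s]$, which has $|\cL|\cdot s=ns=(r^2+r+1)s$ elements, as required. The crucial design decision is to prescribe, independently of the eventual column positions, \emph{which} colours occur in each row: the colours appearing in the row indexed by a point $p$ shall be exactly the pairs $(L,k)$ with $p\in L$ and $k\in[1,s]$. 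By $B_3$ there are exactly $(r+1)s$ such colours, matching the number $(r+1)s$ of columns.

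With this choice the completeness demanded for membership in $\cM^*$ comes for free. Given two distinct colours $(L_1,k_1)$ and $(L_2,k_2)$, if $L_1\ne L_2$ then by $B_1$ the two lines meet in a unique point $p$, and the row indexed by $p$ contains both colours; if $L_1=L_2$ (forcing $k_1\ne k_2$) then the row indexed by any of the $r+1$ points of that common line contains both. Thus every pair of colours already shares a \emph{row}. Consequently, once I produce a matrix whose rows carry exactly these prescribed colour sets and whose rows and columns each have pairwise distinct entries, it will lie in $\cM^*(n,(r+1)s,C)$, which is part~1. Part~2 then follows at once from Proposition~\ref{mat}, since $\cM^*\subseteq\cM$ gives $\cM(n,(r+1)s,C)\ne\emptyset$ and hence $\achr(K_n\square K_{(r+1)s})\ge|C|=(r^2+r+1)s$.

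The real work, and the step I expect to be the main obstacle, is to place the colours into the $(r+1)s$ columns so that each column also has pairwise distinct entries, while respecting the prescribed per-row colour sets. I would encode this as a proper edge-colouring problem. Let $H$ be the bipartite graph on parts $P$ and $C$ in which $p$ is joined to $(L,k)$ exactly when $p\in L$; its edges are in bijection with the cells the matrix must fill. Here each point $p$ has degree $(r+1)s$ (by $B_3$) and each colour $(L,k)$ has degree $r+1$ (by $B_2$), so the maximum degree of $H$ equals $(r+1)s$. By König's edge-colouring theorem, $H$ admits a proper edge-colouring using exactly $(r+1)s$ colours, and I would take these colours to be the column indices. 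Because each point $p$ has degree equal to the number of available column-colours, its incident edges receive all $(r+1)s$ column-colours bijectively; hence the row of $p$ acquires each of its prescribed colours exactly once and every cell of the matrix is determined. Since each column-colour class is a matching, its edges have distinct endpoints on the $C$-side, which is exactly the assertion that the corresponding column has pairwise distinct entries. This produces the desired matrix in $\cM^*(n,(r+1)s,C)$ and completes both parts. (The argument in fact uses only $s\ge1$, so the hypothesis $s\ge r+1$ is more than enough here.)
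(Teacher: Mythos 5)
Your proof is correct, but it takes a genuinely different route at the key step. Up to plane duality your setup coincides with the paper's: the paper indexes rows by lines and takes colours $(p,t)\in P\times[1,s]$, so that the row of a line $L$ must carry all $s$ colours of each point of $L$, whereas you index rows by points and take colours $\cL\times[1,s]$; the completeness argument (two lines meet in a point, resp.\ two points span a line, plus the $s$ clones of each line/point sharing all $r+1$ incident rows) is the same in both treatments, and in both cases membership in $\cM^*$ follows because every pair of colours already shares a row. The difference is in how the prescribed row contents are distributed among the $(r+1)s$ columns. The paper does this \emph{explicitly}: it labels the $r+1$ occurrences of each point, replaces the $l$th occurrence by the $l$th row of an $(r+1)\times s$ cyclic (Latin-rectangle-style) block $M_k^s$, and checks column-properness directly -- this is where the hypothesis $s\ge r+1$ is actually used, since two occurrence-blocks of the same point meeting the same column contribute entries $(p_k,(l_1+j-1)_s)$ and $(p_k,(l_2+j-1)_s)$ with $l_1\ne l_2$ in $[1,r+1]$. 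You instead invoke K\H{o}nig's edge-colouring theorem on the bipartite incidence graph $H$ between $P$ and $C$, reading the $(r+1)s$ edge-colours as column indices; your degree count is right, every point has full degree $(r+1)s=\Delta(H)$ so each row meets every column exactly once, and each colour class being a matching gives column-properness. Your argument is shorter and, as you note, works for every $s\ge1$, so it proves the lemma under a weaker hypothesis (this costs nothing downstream, since Theorem~\ref{main} needs $s\ge r^3+1$ anyway, and in fact your observation pinpoints that the restriction $s\ge r+1$ in the lemma is an artifact of the paper's cyclic construction). What the paper's approach buys in exchange is a completely explicit, easily displayed matrix (illustrated for $r=2$, $s=3$) whose verification is elementary arithmetic modulo $s$, with no appeal to edge-colouring machinery.
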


\begin{proof}
1. Let $(P,\cL)$ be a finite projective plane of order $r$ with $P=\{p_k:k\in[1,r^2+r+1]\}$ and $\cL=\{L_k:k\in[1,r^2+r+1]\}$ (see the properties $B_4$, $B_5$). Consider an $(r^2+r+1)\times(r+1)$ matrix $M$ with elements from $P$ such that, for each $i\in[1,r^2+r+1]$, $L_i$ is equal to the set $\{(M)_{i,j}:j\in[1,r+1]\}$ of elements in the $i$th row of $M$.

Given $k\in[1,r^2+r+1]$ and $l\in[1,r+1]$, replace the $l$th copy of $p_k$ in $M$ with $p_k^l$ (by $B_3$, $p_k$ appears in $r+1$ distinct lines of $\cL$, and so in $r+1$ distinct rows of $M$); we suppose that the ordering of copies of $p_k$ in $M$ is ``inherited'' from the lexicographical ordering of pairs $(i,j)\in[1,r^2+r+1]\times[1,r+1]$ with $(M)_{i,j}=p_k$. Denote by $M'$ the $(r^2+r+1)\times(r+1)$ matrix obtained from $M$ if each point of $P$ in $M$ is replaced in the above way with $p_k^l$, where $(k,l)\in[1,r^2+r+1]\times[1,r+1]$.

For $z\in\mathbb Z$ let$(z)_s$ be the unique $t\in[1,s]$ satisfying $t\equiv z\pmod s$. Further, let $M_k^s$ be the $(r+1)\times s$ matrix with elements from $\{p_k\}\times[1,s]$ defined by $(M_k^s)_{i,j}=(p_k,(i+j-1)_s)$, \textit{i.e.},
\[
M_k^s=
\begin{pmatrix}
(p_k,1) &(p_k,2) &\dots &(p_k,s-1) &(p_k,s)\\
(p_k,2) &(p_k,3) &\dots &(p_k,s) &(p_k,1)\\
\vdots &\vdots &\ddots &\vdots &\vdots\\
(p_k,r) &(p_k,r+1) &\dots &(p_k,r-2) &(p_k,r-1)\\
(p_k,r+1) &(p_k,r+2) &\dots &(p_k,r-1) &(p_k,r)
\end{pmatrix}
\]

Finally, let $M_s$ be the $(r^2+r+1)\times(r+1)s$ matrix obtained from $M'$ if each $p_k^l$ with $(k,l)\in[1,r^2+r+1]\times[1,r+1]$ is replaced with the $1\times s$ block matrix equal to the $l$th row submatrix of $M_k^s$.

Let us show that $M_s\in\cM^*(r^2+r+1,(r+1)s,C)$ where the set of colours $C=\{(p_k,t):k\in[1,r^2+r+1],t\in[1,s]\}$ is of size $(r^2+r+1)s$. First, the $i$th row of $M_s$, $i\in[1,r^2+r+1]$, consists of $(r+1)s$ distinct elements of $C$ (corresponding to $r+1$ points of $L_i$, see $B_2$). Next, the assumption $s\ge r$ guarantees that each column of $M_s$ consists of $r^2+r+1$ distinct elements of $C$ (even if there is a column of $M'$ containing, for some $k\in[1,r^2+r+1]$, all $p_k^l$ with $l\in[1,r+1]$). So, $M_s$ represents a proper vertex colouring.

If $k,l\in[1,r^2+r+1]$, $k\ne l$, by the axiom $A_1$ there is a unique $i\in[1,r^2+r+1]$ such that $\{p_k,p_l\}\subseteq L_i$. Therefore, for any $t,u\in[1,s]$, both $(p_k,t)$ and $(p_l,u)$ belong to the $i$th row of $M_s$. If $k\in[1,r^2+r+1]$ and $t,u\in[1,s]$, $t\ne u$, both $(p_k,t)$ and $(p_k,u)$ appear in $r+1$ rows of $M_s$ (corresponding to $r+1$ rows of $M$ containing $p_k$). Thus $M_s$ represents a complete vertex colouring, too; moreover, $M_s\in\cM^*(r^2+r+1,(r+1)s,C)\ne\emptyset$.

2. Since  $\cM(r^2+r+1,(r+1)s,C)\supseteq\cM^*(r^2+r+1,(r+1)s,C)\ne\emptyset$ (see Lemma~\ref{fppl}.1), using Proposition~\ref{mat} we obtain $\achr(K_{r^2+r+1}\square K_{(r+1)s})\ge|C|=(r^2+r+1)s$.
\end{proof}

Note that the structure of the matrix $M$ from the proof of Lemma~\ref{fppl} (that depends on the projective plane order $r$) is ``largely'' various, but it determines the structure of matrices $M'$ and $M_s$ in a unique way. For example, in the case $r=2$ and $s=3$ we have
\[
M=
\begin{pmatrix}
p_1 &p_2 &p_3\\
p_3 &p_4 &p_5\\
p_5 &p_6 &p_1\\
p_4 &p_1 &p_7\\
p_7 &p_3 &p_6\\
p_5 &p_7 &p_2\\
p_2 &p_6 &p_4
\end{pmatrix}
,\quad
M'=
\begin{pmatrix}
p_1^1 &p_2^1 &p_3^1\\
p_3^2 &p_4^1 &p_5^1\\
p_5^2 &p_6^1 &p_1^2\\
p_4^2 &p_1^3 &p_7^1\\
p_7^2 &p_3^3 &p_6^2\\
p_5^3 &p_7^3 &p_2^2\\
p_2^3 &p_6^3 &p_4^3
\end{pmatrix}
,
\]
and then the matrix $M_3$ is
\begin{equation*}
\begin{pmatrix}
(p_1,1) &(p_1,2) &(p_1,3) &(p_2,1) &(p_2,2) &(p_2,3) &(p_3,1) &(p_3,2) &(p_3,3)\\
(p_3,2) &(p_3,3) &(p_3,1) &(p_4,1) &(p_4,2) &(p_4,3) &(p_5,1) &(p_5,2) &(p_5,3)\\
(p_5,2) &(p_5,3) &(p_5,1) &(p_6,1) &(p_6,2) &(p_6,3) &(p_1,2) &(p_1,3) &(p_1,1)\\
(p_4,2) &(p_4,3) &(p_4,1) &(p_1,3) &(p_1,1) &(p_1,2) &(p_7,1) &(p_7,2) &(p_7,3)\\
(p_7,2) &(p_7,3) &(p_7,1) &(p_3,3) &(p_3,1) &(p_3,2) &(p_6,2) &(p_6,3) &(p_6,1)\\
(p_5,3) &(p_5,1) &(p_5,2) &(p_7,3) &(p_7,1) &(p_7,2) &(p_2,2) &(p_2,3) &(p_2,1)\\
(p_2,3) &(p_2,1) &(p_2,2) &(p_6,3) &(p_6,1) &(p_6,2) &(p_4,3) &(p_4,1) &(p_4,2)
\end{pmatrix}
.
\end{equation*}

\section{Main theorem}

\begin{theorem}\label{main}
If $r$ is a finite projective plane order, $s\in[r^3+1,\infty)$ and $t\in[0,r]$, then 
\[
(r^2+r+1)s+t\le\achr(K_{r^2+r+1}\square K_{(r+1)s+t})\le(r^2+r+1)s+rt.
\]
\end{theorem}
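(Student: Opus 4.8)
The plan is to prove the two inequalities separately; throughout write $p=r^2+r+1$ and $q=(r+1)s+t$, so that the claim reads $ps+t\le\achr(K_p\square K_q)\le ps+rt$.

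For the lower bound I would start from Lemma~\ref{fppl}.1, which hands me a colour set $C_0$ of size $ps$ together with a member of $\cM^*(p,(r+1)s,C_0)$, and then iterate the column‑extension construction from the proof of Lemma~\ref{+1} exactly $t$ times. Each application widens the matrix by one column and adjoins one fresh colour, so after $t$ steps I reach a nonempty $\cM^*(p,(r+1)s+t,C_t)$ with $|C_t|=ps+t$; Proposition~\ref{mat} then gives $\achr(K_p\square K_q)\ge ps+t$. The one hypothesis to monitor is $q\ge 2p-1$, but the narrowest matrix entering the iteration already has $(r+1)s\ge(r+1)(r^3+1)$ columns, which towers over $2p-1=2r^2+2r+1$, so every step is legitimate. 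I would invoke the \emph{construction} inside Lemma~\ref{+1} rather than its literal statement, since at intermediate steps my colour set need not have the maximum size $\achr$; that construction needs only $\cM^*(p,q,C)\ne\emptyset$ together with $q\ge 2p-1$.

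For the upper bound I would argue by contradiction using Lemma~\ref{opt}. Set $a=\achr(K_p\square K_q)$ and $D=ps+rt$, and suppose $a\ge D+1$. The bookkeeping identity $pq=(r+1)D+t$ (a one‑line check) then yields
\[
\frac{pq}{a}\le\frac{pq}{D+1}=(r+1)+\frac{t-(r+1)}{D+1}<r+1,
\]
the strict inequality being exactly where the hypothesis $t\le r$ enters. Hence the minimum frequency obeys $l\le\lfloor pq/a\rfloor\le r$ by Lemma~\ref{opt}.2. Since $q$ dwarfs $p$, the map $l\mapsto l(p+q-l-1)+1$ from Lemma~\ref{opt}.3 is increasing on $[1,p]$, whence, using $l\le r\le p$,
\[
a\le l(p+q-l-1)+1\le r(p+q-r-1)+1=D+1+(r^3-s).
\]
Now the hypothesis $s\ge r^3+1$ forces $r^3-s\le-1$, so this reads $a\le D$, contradicting $a\ge D+1$. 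Therefore $a\le D=ps+rt$.

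I expect the upper bound to be the main obstacle, and inside it the delicate point is orchestrating the two frequency estimates of Lemma~\ref{opt} in the right order: part~2 together with $t\le r$ must first confine the minimum frequency to $l\le r$, and only after that substitution does part~3 together with $s\ge r^3+1$ close the gap. Pinning down the identity $pq=(r+1)D+t$ and evaluating $r(p+q-r-1)+1$ cleanly is where the genuine arithmetic lies; the lower bound, by contrast, is a routine iteration once the seed matrix of Lemma~\ref{fppl} is in place.
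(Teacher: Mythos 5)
Your proposal is correct and follows essentially the same route as the paper: the lower bound comes from Lemma~\ref{fppl} followed by a $t$-fold iteration of the column-extension construction inside Lemma~\ref{+1} (your remark that one must invoke the construction rather than the literal lemma, since intermediate colour sets need not have maximum size, is a sound reading of what the paper's induction actually does), and the upper bound uses parts 2 and 3 of Lemma~\ref{opt} with the same arithmetic. The only difference is cosmetic: you package the upper bound as a contradiction, whereby the hypothesis $a\ge ps+rt+1$ forces $l\le r$ via Lemma~\ref{opt}.2, merging the paper's two cases $l_{s,t}=r+1$ and $l_{s,t}\le r$ into a single application of Lemma~\ref{opt}.3.
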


\begin{proof}
Denote $a_{s,t}=\achr(K_{r^2+r+1}\square K_{(r+1)s+t})$ for $t\in[0,r]$. Since $s\ge r^3+1\ge r+1$, from Lemma~\ref{fppl} we know there is a colour set $C_{s,0}^*$ of size $(r^2+r+1)s$ such that $\cM^*(r^2+r+1,(r+1)s,C_{s,0}^*)\ne\emptyset$ and
\begin{equation}\label{lb0}
a_{s,0}\ge(r^2+r+1)s.
\end{equation}

If $t\in[0,r-1]$, it is an easy exercise to prove the inequality $(r+1)s+t\ge2(r^2+r+1)-1$. Therefore, using (\ref{lb0}) and Lemma~\ref{+1}, by induction on $t$ we see that for any $t\in[0,r]$ there exists a colour set $C_{s,t}^*$ of size $(r^2+r+1)s+t$ with $\cM^*(r^2+r+1,(r+1)s+t,C_{s,t}^*)\ne\emptyset$, which implies
\begin{equation}\label{lbq}
a_{s,t}\ge(r^2+r+1)s+t.
\end{equation}

Now let $C_{s,t}$ be an $a_{s,t}$-element set. By Proposition~\ref{mat} there exists a matrix $M_{s,t}\in\cM(r^2+r+1,(r+1)s+t,C_{s,t})$. If $l_{s,t}$ is the minimum frequency of an element of $C_{s,t}$ in $M_{s,t}$, from Lemma~\ref{opt}.2 and (\ref{lbq}) it follows that
\begin{align}
l_{s,t}&\le\left\lfloor\frac{(r^2+r+1)[(r+1)s+t]}{a_{s,t}}\right\rfloor\le\left\lfloor\frac{(r^2+r+1)[(r+1)s+t}{(r^2+r+1)s}\right\rfloor\nonumber\\
&=r+1.\label{r+1}
\end{align}

In the case $l_{s,t}=r+1$ each colour class of the colouring $f_{M_{s,t}}$ is of cardinality at least $r+1$, hence, by (\ref{lbq}), 
\[
|V(K_{r^2+r+1}\square K_{(r+1)s+t})|=(r^2+r+1)[(r+1)s+t]\ge(r+1)a_{s,t},
\]
so that
\begin{equation}\label{ub1}
a_{s,t}\le(r^2+r+1)s+\left\lfloor\frac{(r^2+r+1)t}{r+1}\right\rfloor=(r^2+r+1)s+rt.
\end{equation}

On the other hand, if $l_{s,t}\le r$ (see (\ref{r+1})), Lemma~\ref{opt}.3 yields
\begin{align}
a_{s,t}&\le l_{s,t}[r^2+r+1+(r+1)s+t-l_{s,t}-1]+1\nonumber\\
&\le r[r^2+r+1+(r+1)s+t-r-1]+1\nonumber\\
&=r[r^2+(r+1)s+t]+1\label{ub2}
\end{align}
(since $r\ge2$, the polynomial $x[r^2+r+1+(r+1)s+t-x-1]+1$ in variable $x$ is increasing for $x\le r$), and from (\ref{ub2}) we obtain
\begin{equation}\label{ub3}
a_{s,t}\le r^3+r(r+1)s+rt+1\le s+r(r+1)s+rt=(r^2+r+1)s+rt.
\end{equation}

From (\ref{ub1}) and (\ref{ub3}) we see that 
\begin{equation}\label{ubq}
a_{s,t}\le(r^2+r+1)s+rt
\end{equation}
independently from the value of $l_{s,t}$, and so, by (\ref{lbq}) and (\ref{ubq}),
for any $t\in[0,r]$ (including $t=k$) we have $(r^2+r+1)s+t\le a_{s,t}\le(r^2+r+1)s+rt$.
\end{proof}

\begin{corollary}
If $r$ is a finite projective plane order and $s\in[r^3+1,\infty)$, then 
\[
\achr(K_{r^2+r+1}\square K_{(r+1)s})=(r^2+r+1)s.
\]
\end{corollary}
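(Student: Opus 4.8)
The plan is to obtain the corollary as the special case $t=0$ of Theorem~\ref{main}. Setting $t=0$ in the main theorem, the two bounding expressions $(r^2+r+1)s+t$ and $(r^2+r+1)s+rt$ both collapse to $(r^2+r+1)s$, so the double inequality of the theorem becomes
\[
(r^2+r+1)s\le\achr(K_{r^2+r+1}\square K_{(r+1)s})\le(r^2+r+1)s,
\]
which forces equality. The hypotheses of the corollary, namely that $r$ is a finite projective plane order and $s\in[r^3+1,\infty)$, are exactly the hypotheses of Theorem~\ref{main} specialised to $t=0$, so no additional verification is required.

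Since the substance of the corollary is entirely contained in Theorem~\ref{main}, there is no genuine obstacle; the only care needed is to check that $t=0$ is a legitimate value, which it is because the theorem quantifies over $t\in[0,r]$ and $0$ lies in that interval. I would therefore present the proof as a one-line deduction, simply instantiating the theorem at $t=0$ and observing that the lower and upper bounds coincide.

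If one wished to avoid invoking the full strength of Theorem~\ref{main} (for instance to give a self-contained argument), the lower bound alone follows directly from Lemma~\ref{fppl}.2, which already gives $\achr(K_{r^2+r+1}\square K_{(r+1)s})\ge(r^2+r+1)s$ under the weaker hypothesis $s\ge r+1$. The matching upper bound is the harder half and is precisely what the proof of Theorem~\ref{main} supplies in the case $t=0$: one takes an optimal matrix $M_{s,0}\in\cM(r^2+r+1,(r+1)s,C_{s,0})$ on $a_{s,0}$ colours, bounds its minimum frequency $l_{s,0}$ above by $r+1$ via Lemma~\ref{opt}.2 together with the lower bound, and then splits into the cases $l_{s,0}=r+1$ and $l_{s,0}\le r$. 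In the first case a counting argument on the $(r^2+r+1)(r+1)s$ vertices yields $a_{s,0}\le(r^2+r+1)s$, while in the second case Lemma~\ref{opt}.3 together with the hypothesis $s\ge r^3+1$ (used to absorb the additive term $r^3+1$ into $s$) delivers the same bound. Thus the cleanest route is to quote Theorem~\ref{main} at $t=0$; the engine behind that substitution is the case analysis on $l_{s,0}$, where the role of the assumption $s\ge r^3+1$ is the key point ensuring the upper bound does not exceed the lower bound.
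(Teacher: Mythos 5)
Your proof is correct and takes exactly the paper's route: the paper's entire proof of the corollary is to instantiate Theorem~\ref{main} at $t=0$ (written there as ``$k=0$'', a leftover notational slip), whereupon the lower and upper bounds coincide. Your supplementary sketch of the underlying case analysis on $l_{s,0}$ accurately summarises the theorem's proof but is not needed for the corollary itself.
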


\begin{proof}
Take $k=0$ in Theorem~\ref{main}.
\end{proof}

\section{Asymptotic analysis}\label{asan}

\begin{theorem}\label{as}
If $r$ is a finite projective plane order, then 
\[
\lim_{q\to\infty}\frac{\achr(K_{r^2+r+1}\square K_q)}{q}=\frac{r^2+r+1}{r+1}.
\]
\end{theorem}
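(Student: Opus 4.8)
The plan is to deduce the limit directly from the two-sided estimate of Theorem~\ref{main} by a squeeze argument. Throughout I would write $p=r^2+r+1$, which is fixed once the finite projective plane order $r$ is fixed, and I would only ever consider large $q$, since that is all a limit as $q\to\infty$ sees.

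First I would put an arbitrary large $q$ into the shape handled by Theorem~\ref{main}. By the division algorithm there are unique integers $s,t$ with $q=(r+1)s+t$, where $s=\left\lfloor\frac{q}{r+1}\right\rfloor$ and $t\in[0,r]$; note that every residue class modulo $r+1$ is represented by some $t\in[0,r]$, so this covers all $q$. Since $t$ is confined to the fixed finite set $[0,r]$ while $q\to\infty$, we have $s\to\infty$, and in particular $s\ge r^3+1$ for all sufficiently large $q$. Hence Theorem~\ref{main} applies and gives, for large $q$,
\[
(r^2+r+1)s+t\le\achr(K_p\square K_q)\le(r^2+r+1)s+rt.
\]

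Next I would divide through by $q=(r+1)s+t$ to obtain
\[
\frac{(r^2+r+1)s+t}{(r+1)s+t}\le\frac{\achr(K_p\square K_q)}{q}\le\frac{(r^2+r+1)s+rt}{(r+1)s+t},
\]
and then divide numerator and denominator of each outer fraction by $s$, so that the lower bound becomes $\frac{(r^2+r+1)+t/s}{(r+1)+t/s}$ and the upper bound $\frac{(r^2+r+1)+rt/s}{(r+1)+t/s}$. As $q\to\infty$ we have $s\to\infty$ and $0\le t/s\le r/s\to0$, whence both outer fractions tend to $\frac{r^2+r+1}{r+1}$. The squeeze theorem then yields $\lim_{q\to\infty}\frac{\achr(K_p\square K_q)}{q}=\frac{r^2+r+1}{r+1}$, as claimed.

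The whole argument is essentially bookkeeping, the genuine content residing entirely in Theorem~\ref{main}; I do not expect a real obstacle here. The one point deserving a line of care is the role of the remainder $t$: because $t$ does not grow with $q$, its contributions $t/s$ and $rt/s$ in the two bounds are negligible in the limit, and it is precisely this that makes the lower and upper bounds coalesce to the same value. The only other thing to record explicitly is that $s=\left\lfloor\frac{q}{r+1}\right\rfloor\to\infty$ guarantees the hypothesis $s\ge r^3+1$ of Theorem~\ref{main} eventually, which is immediate.
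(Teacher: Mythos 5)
Your proposal is correct and follows essentially the same route as the paper: both write $q=(r+1)s+t$ with $s=\left\lfloor\frac{q}{r+1}\right\rfloor$ and $t\in[0,r]$, invoke Theorem~\ref{main} once $s\ge r^3+1$, and squeeze $\frac{\achr(K_{r^2+r+1}\square K_q)}{q}$ between two bounds tending to $\frac{r^2+r+1}{r+1}$. The only cosmetic difference is that the paper bounds the remainder term by $rt\le r^2$ and divides by $q$ directly, whereas you divide numerator and denominator by $s$ and let $t/s\to0$; these are interchangeable pieces of bookkeeping.
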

\begin{proof}
Denote for simplicity $a(p,q)=\achr(K_p\square K_q)$. We have $a(p,q)=a\left(p,(r+1)\left\lfloor\frac q{r+1}\right\rfloor)+k\right)$, where $k=q-(r+1)\left\lfloor\frac q{r+1}\right\rfloor\in[0,r]$. If $s=\left\lfloor\frac q{r+1}\right\rfloor\ge r^3+1$, then, by Theorem~\ref{main},
\begin{align}\label{bds}
\frac{(r^2+r+1)\left\lfloor\frac q{r+1}\right\rfloor}{q}&\le\frac{(r^2+r+1)\left\lfloor\frac q{r+1}\right\rfloor+k}{q}\nonumber\\
&\le\frac{a\left(r^2+r+1,(r+1)\left\lfloor\frac q{r+1}\right\rfloor+k\right)}{q}\nonumber\\
&=\frac{a(r^2+r+1,q)}{q}\le\frac{(r^2+r+1)\left\lfloor\frac q{r+1}\right\rfloor+rk}{q}\nonumber\\
&\le\frac{(r^2+r+1)\left\lfloor\frac q{r+1}\right\rfloor+r^2}{q}.
\end{align}
Now, having in mind that $\lim_{q\to\infty}\frac{\left\lfloor\frac q{r+1}\right\rfloor}{q}=\frac1{r+1}$ and $\lim_{q\to\infty}\frac{r^2}{q}=0$, from (\ref{bds}) we obtain $\lim_{q\to\infty}\frac{a(r^2+r+1,q)}{q}=\frac{r^2+r+1}{r+1}$.
\end{proof}

From the known results for $\achr(K_p\square K_q)$ with $p\le6$, see~\cite{HoPu} ($p\le4$), \cite{HoPc1} ($p=5$) and \cite{Ho1}, \cite{Ho2} ($p=6$), we can easily deduce the existence and the value of the limit $l_p=\lim_{q\to\infty}\frac{\achr(K_p\square K_q)}{q}$. Namely, we have $l_1=1=l_2$, $l_3=\frac32$, $l_4=\frac53$, $l_5=\frac95$ and $l_6=2$. These facts together with Theorem~\ref{as} motivate us to formulate

\begin{conjecture}\label{cj}
If $p\in[1,\infty)$, then $\lim_{q\to\infty}\frac{\achr(K_p\square K_q)}{q}$ does exist and is a rational number.
\end{conjecture}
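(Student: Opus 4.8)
The plan is to pin the limit to the value of an explicit rational linear program read off from the intersection structure of colour supports, to prove a matching upper bound by the counting method already present in Lemma~\ref{opt}, and to supply a matching lower bound by generalising the block construction of Lemma~\ref{fppl}. Fix $p$ and put $b_q=\achr(K_p\square K_q)$. Using Proposition~\ref{mat} I work with a matrix $M\in\cM(p,q,C)$ and assign to each colour $c$ its \emph{support} $S_c\subseteq[1,p]$, the set of rows in which $c$ appears; by row- and column-properness $c$ occupies exactly $|S_c|$ cells, lying in $|S_c|$ distinct columns. The decisive reformulation is that two colours whose supports meet already share a row, so the completeness of $f_M$ is equivalent to the single requirement that any two colours with \emph{disjoint} supports share a column.

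From this reformulation both a limiting optimization and an upper bound fall out. Since $c$ meets only the $\le|S_c|(p-1)\le p(p-1)$ colours sitting in its columns, a colour can be column-paired with at most $p(p-1)$ others; hence if two support-types $S,S'$ are disjoint, at least one of them is carried by only $O(1)$ colours. Consequently the support-types occurring with positive asymptotic density form an \emph{intersecting} family, and writing $\xi_S=\lim x_S/q$ for these densities (where $x_S$ is the number of colours of type $S$), the fact that every cell is coloured makes each row full, so $\sum_{S\ni i}x_S=q$ and thus $\sum_{S\ni i}\xi_S=1$ for every row $i$. Passing to a subsequence along which the family of high-density types is fixed, one obtains
\[
\limsup_{q\to\infty}\frac{b_q}{q}\ \le\ \alpha_p:=\max\Big\{\textstyle\sum_{S}\xi_S:\ \{S:\xi_S>0\}\text{ intersecting},\ \xi\ge0,\ \sum_{S\ni i}\xi_S=1\ (i\in[1,p])\Big\},
\]
the maximum taken over the finitely many intersecting families $\mathcal F\subseteq2^{[1,p]}\setminus\{\emptyset\}$; for each fixed family this is a linear program with $0/1$ data, so $\alpha_p$ is attained at a rational vertex and is \emph{rational}. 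One checks that $\alpha_p$ reproduces every known value: the family $\{\{1,2\},\{1,3\},\{1,4\},\{2,3,4\}\}$ gives $\alpha_4=\tfrac53$, the family of all lines of a projective plane (pairwise intersecting by axiom $A_1$) gives $\alpha_{r^2+r+1}=\tfrac{r^2+r+1}{r+1}$ in agreement with Theorem~\ref{as}, and similarly $\alpha_3=\tfrac32$, $\alpha_5=\tfrac95$, $\alpha_6=2$.

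It remains to prove the matching lower bound $\liminf_{q\to\infty}b_q/q\ge\alpha_p$. The plan is to fix an optimal intersecting family $\mathcal F$ and rational weights $\xi_S$ attaining $\alpha_p$, clear denominators to obtain an integral pattern, and realise it as a periodic $p\times q$ colouring in the spirit of Lemma~\ref{fppl}: each support-type $S$ is expanded into cyclic blocks playing the role of the matrices $M_k^s$, arranged so that every colour of type $S$ occupies rows $S$ in distinct columns, while the intersecting structure of $\mathcal F$ supplies, for every pair of colours, a common row. Lemma~\ref{+1} then upgrades the arithmetic-progression lengths produced by the pattern to all sufficiently large $q$. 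Combined with the upper bound this would yield $\lim_{q\to\infty}b_q/q=\alpha_p$, a rational number, proving the conjecture.

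The main obstacle is exactly this lower-bound realisation for an \emph{arbitrary} optimal family $\mathcal F$. For the projective-plane families the required design is furnished for free by the incidence structure, which is what makes Lemma~\ref{fppl} work; but for a general intersecting family attaining $\alpha_p$ one must build, by hand or by a general design-theoretic device, a proper arrangement in which colours of each type are laid into their rows with all columns distinct and in which the (few) bounded-count off-pattern types are still correctly column-covered. Turning the fractional LP optimum into an exact, periodically tileable colouring valid for every residue of $q$ — and, with it, confirming that $\alpha_p$ is not merely an upper bound but the true constant — is precisely the step that has so far been carried through only case by case for small $p$ and for projective-plane orders, and it is where the difficulty of the conjecture resides.
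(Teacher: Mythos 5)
You are attempting to prove Conjecture~\ref{cj}, which the paper does not prove at all: it is posed as an open problem, motivated by Theorem~\ref{as} and the known limits $l_1,\dots,l_6$, so there is no proof in the paper to compare against, and the only question is whether your argument is complete. It is not, and you say so yourself. The upper-bound half is essentially sound: the support reformulation (intersecting supports share a row, so completeness reduces to column-coverage of disjoint-support pairs), the observation that a colour has at most $|S_c|(p-1)\le p(p-1)$ column-neighbours — whence any two types with counts exceeding $p(p-1)$ must intersect — and the compactness/finiteness-of-families argument do give $\limsup_{q\to\infty}\achr(K_p\square K_q)/q\le\alpha_p$ with $\alpha_p$ rational. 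But the conjecture asserts \emph{existence} of the limit, and for that you need $\liminf_{q\to\infty}\achr(K_p\square K_q)/q\ge\alpha_p$, which you explicitly do not supply; your assertions that $\alpha_p$ ``reproduces'' $l_3,\dots,l_6$ are consistency checks (and for $p=5,6$ not even verified — exhibiting one feasible family only bounds $\alpha_p$ from below), not a substitute for the missing bound. As submitted, the proposal is a programme with a conceded gap, not a proof.

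That said, the gap you name is more tractable than your closing paragraph suggests, and your stated obstacle is partly illusory. First, if you realise \emph{only} types from an intersecting family $\mathcal F$, completeness is automatic: two colours of the same type occupy the same rows, and colours of distinct types in $\mathcal F$ share a row by definition, so the matrix would lie in $\cM^*$ and there are no ``off-pattern types'' to column-cover; only properness needs arranging. Second, that arrangement is a bipartite edge-colouring problem: given integers $x_S$ ($S\in\mathcal F$) with $\sum_{S\ni i}x_S=q$ for every row $i$ (clear denominators in an optimal rational LP solution and take $q$ in the right residue class), form the bipartite graph with a vertex for each colour and each row and an edge $(c,i)$ whenever $i\in S_c$; row-vertices have degree exactly $q$ and colour-vertices degree $|S_c|\le p\le q$, so K\"onig's edge-colouring theorem yields a proper $q$-edge-colouring, and reading the colour of edge $(c,i)$ as the column of $c$ in row $i$ produces a matrix in $\cM^*(p,q,C)$ with $|C|=\sum_S x_S=\alpha_p q$. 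Interpolating to the remaining residues via the construction in Lemma~\ref{+1} (whose proof uses only $\cM^*(p,q,C)\ne\emptyset$, not the hypothesis $|C|=\achr(K_p\square K_q)$ in its statement) would give $\liminf\ge\alpha_p$ and hence $\lim=\alpha_p$. You should check this realisation step with care before claiming it — it is precisely the step the paper's author leaves open — but without it, or some replacement for it, your proposal establishes only a rational upper bound on the $\limsup$ and does not prove the conjecture.
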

\vskip2mm

\noindent{\Large{\bf Acknowledgement}}
\vskip1mm

\noindent This work was supported by the Slovak Research and Development Agency under the contract APVV-19-0153 and by the grant VEGA 1/0574/21.

\end{document}